\newtheorem{Thm}{Theorem}[section]
\newtheorem{corollary}[Thm]{Corollary}
\newtheorem{theorem}[Thm]{Theorem}
\def\ldots{\mathinner{\ldotp\ldotp\ldotp}}
\def\ldots{\mathinner{\cdotp\cdotp\cdotp}}
\def\H{\mathbb{H}}
\begin{document}

\title[Spielman and Srivastava]
{The simplified version of the Spielman and Srivastava algorithm for proving the Bourgain-Tzafriri 
restricted invertiblity theorem}
\author[P.G. Casazza 
 ]{Peter G. Casazza }
\address{Department of Mathematics, University
of Missouri, Columbia, MO 65211-4100}

%\subjclass{Primary: }
\thanks{The author was supported by  NSF DMS 1008183; and  NSF ATD 1042701; 
AFOSR  DGE51:  FA9550-11-1-0245}

\email{casazzap@missouri.edu}

\maketitle

\begin{abstract}
%\gk{This is the command}
By giving up the best constants, we will see that the original argument of Spielman and
Srivastava for proving the Bourgain-Tzafriri Restricted 
Invertibility Theorem \cite{SS} still works - and is much
simplier than the final version.  We do not intend on publishing this since it is their
argument with just a trivial modification, but we want to make it available
to the mathematics community since several people have requested it already.
\end{abstract}

\section{Introduction}

Recently, Spielman and Sristave \cite{SS} made a stunning achievement by showing that
one of the deeper and most useful results in pure mathematics, the Bourgain-Tzafriri
Restricted Invertibility Theorem \cite{BT}, can be proved directly with an algorithm.
The original proof had a technical error which they corrected in a later version.  
But this correction
{\it doubled} the degree of difficulty of the proof.  We will see that their original proof
is still valid if we are willing to give up the best constant in the theorem.

\section{The Theorem and Their Original  Proof Adjusted}

\begin{theorem}[Spielman and Srivastave]
Let $\H$ be a Hilbert space with orthonormal basis $\{v_i\}_{i=1}^n$. 
Assume $L:\H \rightarrow \H$ is a linear operator with $\|Lv_i\|=1$
for all $i=1,2,\ldots,n$ and assume 
\[ A = \sum_{i=1}^m Lv_i Lv_i^T ,\]
has $m$ non-zero eigenvalues, all of which are greater than b, and $b'=b-\delta>\delta$.
If
\[ Tr[L^T(A-bI)^{-1}L]\le -n - \frac{2\|L\|^2}{\delta},\]
then there exists a vector $\omega \in \{Lv_i\}_{i=1}^n$ satisfying:
\vskip12pt
1.   $\omega^T(A-b'I)^{-1}\omega < -1$, and hence $\omega = Lv_j$ for some $m<j\le n$.
\vskip12pt
2.   $Tr[L^T(A+\omega \omega^T-b'I)^{-1}L] \le Tr[L^T(A-bI)^{-1}L]\le 
-n -\frac{2\|L\|^2}{\delta}.$
\vskip12pt
(Note that we added a 2 to the original constant in \cite{SS} (part (2) above)
and as a result we have
to change their starting point barrier from $(1-\epsilon )$ to $(1-2\epsilon$).)
\end{theorem}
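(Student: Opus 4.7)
The plan is to update the barrier $b\mapsto b':=b-\delta$ and the matrix $A\mapsto A+\omega\omega^T$ simultaneously, apply the Sherman--Morrison identity, and then average over the candidates $\omega_j:=Lv_j$ for $j>m$ to produce some choice satisfying both (1) and (2). Throughout, write $B:=(A-b'I)^{-1}$ (invertible since $b'$ avoids the spectrum of $A$), $C:=LL^T$ (symmetric PSD with $\|C\|=\|L\|^2$), $\Phi(X,t):=Tr[L^T(X-tI)^{-1}L]$, and $M:=\Phi(A,b)-\Phi(A,b')$.

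Sherman--Morrison gives
\[\Phi(A+\omega\omega^T,b')=\Phi(A,b')-\frac{\omega^T BCB\omega}{1+\omega^T B\omega},\]
and the resolvent identity $(A-bI)^{-1}-B=\delta\,(A-bI)^{-1}B$ produces a PSD matrix (the two commuting factors have same-sign eigenvalues on both the range and the kernel of $A$), so $M\ge 0$. Condition (1) says $1+\omega^TB\omega<0$, and clearing this negative denominator turns the conjunction of (1) and (2) into the single inequality
\[ \omega^T BCB\omega + M\bigl(1+\omega^T B\omega\bigr)\le 0 \qquad (\star);\]
conversely, if $(\star)$ holds while (1) fails, both summands of $(\star)$ are non-negative and must vanish, leaving only a degenerate boundary case. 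It therefore suffices to exhibit some $j>m$ with $(\star)$ holding for $\omega_j$.

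Averaging $(\star)$ over $j>m$ using $\sum_k v_kv_k^T=I$, and discarding the non-negative contributions from $j\le m$ (for which $\omega_j$ lies in the range of $A$, a subspace on which $B$ has positive eigenvalues $1/(\mu_i-b')$), gives
\[\sum_{j>m}\omega_j^T BCB\omega_j\le Tr[BCBC],\qquad \sum_{j>m}\omega_j^T B\omega_j\le \Phi(A,b').\]
Combined with $n-m\le n$ and the hypothesis $n+\Phi(A,b')\le n+\Phi(A,b)\le -2\|L\|^2/\delta$, these bound the sum of $(\star)_j$ over $j>m$ by $Tr[BCBC]-2\|L\|^2 M/\delta$. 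The entire argument therefore reduces to proving the trace inequality $Tr[BCBC]\le 2\|L\|^2 M/\delta$.

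This trace bound is the main obstacle. I would handle it in two steps: first Cauchy--Schwarz in the Hilbert--Schmidt inner product,
\[ Tr[BCBC]\le Tr[B^2C^2]\le \|C\|\,Tr[L^T(A-b'I)^{-2}L]=\|L\|^2\,Tr[L^T(A-b'I)^{-2}L],\]
and then the operator inequality
\[(A-b'I)^{-2}\preceq 2\,(A-bI)^{-1}(A-b'I)^{-1},\]
checked eigenvalue by eigenvalue: on the range of $A$ the ratio $(\mu-b)/(\mu-b')\le 1$ since $\mu>b>b'$, and on the kernel of $A$ the ratio is $b/b'<2$ because the hypothesis $b'>\delta$ gives $b=b'+\delta<2b'$. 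Since $Tr[L^T(A-bI)^{-1}(A-b'I)^{-1}L]=M/\delta$, this yields $Tr[BCBC]\le 2\|L\|^2 M/\delta$ and hence $\sum_{j>m}(\star)_j\le 0$, so at least one $j>m$ provides the required $\omega=Lv_j$. The factor $2$ from the kernel ratio $b/b'<2$ is exactly where the enlarged constant $2\|L\|^2/\delta$ and the stipulation $b'>\delta$ of the hypothesis are consumed, trading the sharp Spielman--Srivastava constant for a simpler proof.
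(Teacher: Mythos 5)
Your proposal is correct and, after unwinding notation, it is essentially the paper's own argument repackaged. The core ingredients coincide: Sherman--Morrison to convert part (2) into the scalar inequality $(\star)$; the resolvent/operator inequality $(A-b'I)^{-2}\preceq 2(A-bI)^{-1}(A-b'I)^{-1}$, verified eigenvalue-by-eigenvalue with $b'>\delta$ used only on the kernel of $A$ (this is exactly the paper's Step 1, written via the resolvent identity, and the factor $2$ from the kernel is what forces the enlarged constant); the bound replacing $Tr[BCBC]$ by $\|L\|^2\,Tr[L^TB^2L]$; and the averaging over $\omega_j=Lv_j$ to extract one good $\omega$. The variations are presentational: you insert a Hilbert--Schmidt Cauchy--Schwarz step $Tr[BCBC]\le Tr[B^2C^2]$ before applying $C\preceq\|C\|I$, whereas the paper applies $LL^T\preceq\|L\|^2 I$ in one stroke; you isolate the hypothesis-free bound $Tr[BCBC]\le 2\|L\|^2 M/\delta$ and only invoke $Tr[L^T(A-bI)^{-1}L]\le -n-2\|L\|^2/\delta$ when averaging, while the paper's Step 3 feeds the hypothesis in immediately to get $Tr[BCBC]\le M(-n-\Phi(A,b'))$; and you restrict the average to $j>m$ from the outset, while the paper averages over all $j$ and deduces $j>m$ afterward. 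One small point that both you and the paper pass over lightly: averaging only yields $(\star)_j\le 0$ for some $j$, while part (1) needs the strict inequality $\omega^TB\omega<-1$. You flag the boundary case but do not dispose of it; to close the gap one should note that $M>0$ strictly (since $Tr[L^TB^2L]>0$) and that the discarded $j\le m$ contributions are strictly positive when $m\ge 1$ (and that the kernel eigenvalue inequality is strict since $b'>\delta$ is strict, covering $m=0$), so that $\sum_{j>m}(\star)_j<0$ and hence some $(\star)_j<0$, which forces $1+\omega_j^TB\omega_j<0$.
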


\begin{proof}

\noindent {\bf Step I:}  We show:
\[ (A-bI)^{-1} -(A-b'I)^{-1} \ge \frac{\delta}{2}  (A-b'I)^{-2},\]
\vskip12pt
\noindent {\bf Note:}  In the original paper the above inequality 
was stated to hold for $\delta$ instead of $\delta/2$.  
But this isn't
true and is not even true for real numbers.    Our fix will change their
{\it perfect constant} for the lower Riesz bound from their $(1-\epsilon)^2$
to $(1-2\epsilon)(1-\epsilon)$.  
\vskip12pt

\noindent {\bf Proof}:  Note first that $\delta \le b'$ implies
$2b' \ge b'+\delta$.  Thus 
\[ \frac{1}{b'+\delta}\ge \frac{1}{2b'},\]
and finally
\[ \frac{1}{b'(b'+\delta)} \ge \frac{1}{2(b')^2}.\]
Now,
\begin{equation}\label{E20}
 \frac{-1}{b}-\frac{-1}{b'} = \frac{b-b'}{bb'}= \frac{\delta}{b'(b'+\delta)}\ge
 \frac{\delta}{2(b')^2} .
 \end{equation}
 Also,
 \[ \lambda_i-b \le \lambda_i-b',
\]
and so
\[ (\lambda_i-b)(\lambda_i-b') \le (\lambda_i-b')^2.\]
Hence,
\[ \frac{1}{(\lambda_i-b)(\lambda_i-b')} \ge \frac{1}{(\lambda_i-b')^2},\]=
  and thus
 \begin{eqnarray*}
  \frac{1}{\lambda_i-b}-\frac{1}{\lambda_i-b'} &=& \frac{b-b'}{(\lambda_i-b)(\lambda_i-b')}\\
  &\ge& \frac{\delta}{(\lambda_i-b')^2}.
  \end{eqnarray*}

\vskip12pt
\noindent {\bf Step 2:}  We observe that
\[ Tr[L^T(A-b'I)^{-1}L] \le Tr[L^T(A-bI)^{-1}L].\]
\vskip12pt

\noindent {\bf Proof}: By Step I,  we have
\begin{eqnarray*}
 Tr[L^T(A-bI)^{-1}L - L^T(A-b'I)^{-1}L] &=& Tr[L^T((A-bI)^{-1}-(A-b'I)^{-1})L]\\
 &\ge& \frac{\delta}{2} Tr[L^T(A-b'I)^{-2}L] \ge0.
 \end{eqnarray*}
 
 \vskip12pt
 
 \noindent {\bf Step 3:}  We show
 \[ Tr[L^T(A-b'I)^{-1}LL^T(A-b'I)^{-1}L]\]
 \[ \le (Tr[L^T(A-bI)^{-1}L]-Tr[L^T(A-b'I)^{-1}L])(-n-Tr[L^T(A-b'I)^{-1}L]).\]
 \vskip12pt
 \noindent {\bf Proof:}  Since 
 \[ Tr[L^T(A-b'I)^{-1}L] \le -n-\frac{2\|L\|^2}{\delta},\]
 we have
 \[ \|L\|^2 \le \frac{\delta}{2} (-n-Tr[L^T(A-b'I)^{-1}L)]),\]
 and hence,
 \[ \|L\|^2 Tr[L^T(A-b'I)^{-2}L] \le\frac{ \delta}{2} Tr[L^T(A-b'I)^{-2}L] (-n-Tr[L^T(A-b'I)^{-1}L]).\]
Applying the proof of Step I, and the facts:  $(A-b'I)^{-1}LL^T(A-b'I)^{-1} \ge 0$
and $LL^T \le \|L\|^2I$, we have
\begin{eqnarray}\label{E2}
Tr[L^T(A-b'I)^{-1}LL^T(A-b'I)^{-1}L]
\end{eqnarray}
\[\le \|L\|^2 
  Tr[L^T(A-b'I)^{-2}L] \]
  \[ \le \frac{\delta}{2} Tr[L^T(A-b'I)^{-2}L](-n-Tr[L^T(A-b'I)^{-1}L])\]
 \[ \le (Tr[L^T(A-bI)^{-1}L] - Tr[L^T(A-b'I)^{-1}L)(-n-Tr[L^T(A-b'I)^{-1}L]).\] 
 \vskip12pt

 \noindent {\bf Step 4:}  We pick a vector $\omega$ satisfying (1) and
 \[ Tr[L^T(A-b'I)^{-1}L] -\frac{\omega (A-b'I)^{-1}LL^T(A-b'I)^{-1}\omega}{1+
 \omega (A-b'I)^{-1}\omega} \le Tr[L^T(A-bI)^{-1}L].\]
 \vskip12pt
\noindent {\bf Proof}:  Noting that $\omega^T\omega =1$, it follows from inequality \ref{E2}
 that there is a vector $\omega \in \{Lv_i\}_{i=1}^n$  so that
 \begin{eqnarray}\label{E1} 
 \omega^T(A-b'I)^{-1}LL^T(A-b'I)^{-1}\omega
 \end{eqnarray}
\[ \le (Tr[L^T(A-bI)^{-1}L] -Tr [L^T(A-b'I)^{-1}L)](-1-\omega^T(A-b'I)^{-1}\omega)
  \]
     Since the left-hand side of Equation \ref{E1} is non-negative, applying Step 1
 we have
 \[ 0 < -1- \omega^T(A-b'I)^{-1}\omega,\]
 and hence
 \[ \omega^T(A-b'I)^{-1}\omega < -1.\]
 For $1\le j \le m$, if $\omega = Lv_j$, then
 \[  \omega^T(A-b'I)^{-1}\omega =  \sum_{i=1}^m \frac{1}{\lambda_i-b'}\omega_i^2 \ge 0.\]
 So $\omega = Lv_j$ for $m<j\le n$.
  Now, Equation \ref{E1} implies
  \[ 
  \frac{\omega^T(A-b'I)^{-1}LL^T(A-b'I)^{-1}\omega}{-1-\omega^T(A-b'I)^{-1}\omega}
  \le Tr[L^T(A-bI)^{-1}L] -Tr [L^T(A-b'I)^{-1}L],\]
  and the result follows.
 \vskip12pt

 \noindent {\bf Step; 5:}   We check part (2) of the theorem.
 \vskip12pt
  \noindent {\bf Proof}:
We apply the Sherman-Morrison formula - which states, for a matrix $A$,
\[ (A+\omega \omega^T)^{-1} = A^{-1} - \frac{A^{-1}\omega\omega^TA^{-1}}{1+
\omega^TA^{-1}\omega}.\]
It follows that (Replacing $A$ by $A-b'I$)
\[ L^T(A+\omega \omega^T-b'I)^{-1}L = 
L^T(A-b'I)^{-1}L - \frac{L^T(A-b'I)^{-1}\omega\omega^T(A-b'I)^{-1}L}{1+
\omega^T(A-b'I)^{-1}\omega}.\]
Thus,
\[Tr[L^T(A+\omega \omega^T-b'I)^{-1}L] =\]
\[
Tr[L^T(A-b'I)^{-1}L] - \frac{Tr[L^T(A-b'I)^{-1}\omega\omega^T(A-b'I)^{-1}L]}{1+
\omega^T(A-b'I)^{-1}\omega}.\]
Using the fact that $Tr[AB] = Tr[BA]$, we have that the above equals
\[ Tr[L^T(A-b'I)^{-1}L]- \frac{Tr[\omega^T(A-b'I)^{-1}LL^T(A-b'I)^{-1}\omega]}
{1+\omega^T(A-b'I)^{-1}\omega}=\]
\[ Tr[L^T(A-b'I)^{-1}L]- \frac{\omega^T(A-b'I)^{-1}LL^T(A-b'I)^{-1}\omega}
{1+\omega^T(A-b'I)^{-1}\omega}.\]
We now have applying Step 4:

  \[Tr[L^T(A+\omega \omega^T - b'I)^{-1}L]  = Tr[(L^T(A-b'I)^{-1}L] - 
\frac{\omega^T(A-b'I)^{-1}LL^T(A-b'I)^{-1}\omega}{1+\omega^T(A-b'I)^{-1}\omega}\]
\[ \le Tr[L^T(A-bI)^{-1}L]\]
This completes the proof of the theorem.
\end{proof}

\begin {corollary}[Bourgain-Tzafriri Restricted Invertibility Theorem]
If we iterate the algorithm $k$ times, we get $k$ vectors from $\{Lv_i\}_{i=1}^m$
with lower Riesz bound for the operator $A$
\[ 1-2\epsilon - (k-1)\delta = (1-2\epsilon)(1-(k-1)\frac{\|L\|^2}{\epsilon n})\]
Hence,

1.  If
\[ k= \lceil \frac{\epsilon^2 n}{\|L\|^2}\rceil,\]
then
\begin{eqnarray*} (1-2\epsilon)\left [ 1-(k-1)\frac{\|L\|^2}{\epsilon n}\right ]&\ge&
(1-2\epsilon)\left [1-\frac{\epsilon^2n}{\|L\|^2}\frac{\|L\|^2}{\epsilon n}\right ]\\
&=& (1-2\epsilon) (1-\epsilon).
  \end{eqnarray*}
which is BT.

2.  If
\[   k= \lceil \frac{\epsilon n}{\|L\|^2}\rceil,      \]
then
\begin{eqnarray*}
(1-2\epsilon)\left [ 1-(k-1)\frac{\|L\|^2}{\epsilon n}\right ] &=&
(1-2\epsilon)\left [ 1-\frac{\epsilon n}{\|L\|^2}\frac{\|L\|^2}{\epsilon n}\right ]\\
&=&  (1-2\epsilon)0,
\end{eqnarray*}
and the process stops.
\end{corollary}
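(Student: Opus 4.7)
The plan is to prove the corollary by iterating the preceding theorem $k$ times and carefully tracking how the barrier $b$ shifts. I would initialize $A_{0}=0$, set $\delta = (1-2\epsilon)\|L\|^{2}/(\epsilon n)$, and start from barrier $b_{0}=1-2\epsilon$. The base-case trace inequality reads $Tr[L^{T}(A_{0}-b_{0}I)^{-1}L] = -n/(1-2\epsilon) \le -n - 2\|L\|^{2}/\delta$, and an elementary rearrangement shows this is equivalent to the chosen value of $\delta$; so the hypothesis of the theorem is met at stage zero.

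The inductive step is then exactly the theorem: given $A_{i-1}$ whose nonzero eigenvalues exceed $b_{i-1}$ and which satisfies the trace invariant $\le -n - 2\|L\|^{2}/\delta$, the theorem produces a new index $j$ (with $Lv_{j}$ different from every previously chosen vector, by part (1)) and a new matrix $A_{i} = A_{i-1} + Lv_{j}(Lv_{j})^{T}$ for which, by part (2), the trace invariant persists with the shifted barrier $b_{i} = b_{i-1}-\delta$. After $k$ iterations the barrier has descended to $b_{k} = 1-2\epsilon-(k-1)\delta$; substituting our value of $\delta$ yields the identity $(1-2\epsilon)\left[1-(k-1)\|L\|^{2}/(\epsilon n)\right]$ claimed in the corollary.

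The step I expect to be the main obstacle, and which requires a small argument beyond what the theorem statement literally asserts, is verifying that after each iteration the nonzero spectrum of $A_{i}$ really sits above the new barrier $b_{i}$, so that the theorem's hypothesis is actually re-met at the next stage. I would derive this from the Sherman--Morrison identity together with the strict inequality $\omega^{T}(A_{i-1}-b_{i}I)^{-1}\omega < -1$ given by part (1): this inequality flips the sign of $\det(A_{i}-b_{i}I)$ relative to $\det(A_{i-1}-b_{i}I)$, while Cauchy interlacing for rank-one perturbations forces exactly one previously negative eigenvalue of $A_{i-1}-b_{i}I$ to cross into the positive range, without moving any positive eigenvalue below zero.

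Once the formula is in hand, the two specific choices of $k$ reduce to arithmetic. For $k=\lceil \epsilon^{2}n/\|L\|^{2}\rceil$ we have $(k-1)\|L\|^{2}/(\epsilon n)\le \epsilon$, and the lower Riesz bound is at least $(1-2\epsilon)(1-\epsilon)$, which is precisely the Bourgain--Tzafriri conclusion. For $k=\lceil \epsilon n/\|L\|^{2}\rceil$ the bracketed factor hits $0$; at this point the hypothesis $b' > \delta$ of the theorem fails, the trace invariant can no longer be propagated, and the iterative algorithm terminates.
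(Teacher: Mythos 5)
Your proposal is correct and follows essentially the same route as the paper: the corollary is simply the theorem iterated with starting barrier $1-2\epsilon$ and step $\delta = (1-2\epsilon)\|L\|^2/(\epsilon n)$, followed by the same arithmetic for the two choices of $k$, and the details you add (the base-case trace computation at $A_0=0$ and the determinant-sign/interlacing check that the updated matrix's nonzero spectrum stays above the shifted barrier) are exactly the standard Spielman--Srivastava bookkeeping that the paper leaves implicit. The only nitpick is the harmless off-by-one between your recursion $b_i = b_{i-1}-\delta$ and the stated bound $1-2\epsilon-(k-1)\delta$, reconciled by noting that the first selected vector has norm one, hence eigenvalue $1 > 1-2\epsilon$, so it requires no barrier shift.
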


\end{document}